%
%
%

\documentclass{article}

\usepackage{graphicx}        
\usepackage{multicol}        

\usepackage{amsmath,amsfonts,amsthm,tikz,hyperref,xparse,xspace,amsthm,algpseudocode,mathrsfs,bbm,bbold,dsfont,subcaption}

\newtheorem{theorem}{Theorem}

\newtheorem{proposition}{Proposition}

\usetikzlibrary{cd}
\usepackage{booktabs,pgfplotstable,array}
\pgfplotsset{compat=1.16}

\def\to{\rightarrow}

\def\iff{\Leftrightarrow}

\def\n#1{|| #1 ||}

\def\Id{\mathds{1}}

\def\wPhi{{\underline \Phi}}
\def\wV{{\underline V}}

\def\we{{\underline e}}
\def\wf{{\underline f}}
\def\wg{{\underline g}}
\def\we{{\underline e}}
\def\wr{{\underline r}}
\def\wt{{\underline t}}
\def\wu{{\underline u}}
\def\wv{{\underline v}}
\def\ww{{\underline w}}
\def\wx{{\underline x}}
\def\wy{{\underline y}}
\def\wz{{\underline z}}
\def\wA{{\underline A}}
\def\wB{{\underline B}}

\def\wId{{\underline \Id}}
\def\wT{{\underline T}}

\def\spectrum{\sigma}

\DeclareMathOperator*{\argmin}{argmin}

\DeclareMathOperator{\SPAN}{span}
\def\range{{\mathscr{R}}}

\def\Bs{B_{\text{\sc ssc}}}

\def\wBs{\wB_{\text{\sc ssc}}}

\def\wTs{\wT_{\text{\sc ssc}}}

\let\oequation\equation
\makeatletter
\def\equation{\@ifnextchar[\equation@lab\oequation}
\def\equation@lab[#1]{\oequation \label{eq:#1}}
\makeatother

\begin{document}

\title{Overlapping  subspaces and singular systems  with
  application to Isogeometric Analysis}
\author{Andrea Bressan \\CNR-IMATI ``E. Magenes’’, \\Via Ferrata 5, 27100 Pavia, Italy,\\[.2cm]
   Massimiliano Martinelli\\ CNR-IMATI ``E. Magenes’’, \\ Via Ferrata 5, 27100 Pavia, Italy, \\[.2cm]
Giancarlo Sangalli \\ Department of Mathematics, \\University of
Pavia, Via Ferrata, 5, 27100 Pavia, Italy}

%
\maketitle

\abstract{We propose a framework for solving partial differential
  equations (PDEs) motivated by isogeometric analysis (IGA) and local
  tensor-product splines. Instead of using a global basis for the
  solution space we use as generators the disjoint
  union of subspace bases. This leads to a potentially singular linear
  system, which is handled by a Krylov linear solver. The
  framework may offer computational advantages in dealing with spaces like Hierarchical B-splines, T-splines, and LR-splines.}

\section{Introduction}

In this paper, we propose a novel framework for numerical methods for
partial differential equations (PDEs), inspired by isogeometric
analysis (IGA) based on  local tensor-product splines that allow local refinement.

Consider the problem: find \(u \in V\) such that
\begin{equation}[system-V]
Au=f,
\end{equation}
where \(A:V\rightarrow V^*\) is a linear operator  and \(f\in V^*\),
with \(V\) being a Hilbert space. Assuming \(V\) is
finite-dimensional, one usually rewrite \eqref{eq:system-V}   in
matrix form by choosing a basis \(\Phi\) for \(V\). Instead, we take a
different approach, further assuming that \(V\) is given as the sum,
in general not direct,  of closed subspaces \(V_i \subset V\), \(i = 1, \ldots, n\),
\begin{equation}\label{eq:V}
V = \sum_{i=1}^n V_i,
\end{equation}
and that we have a basis \(\Phi_i\) for each subspace \(V_i\). We then consider the disjoint union
\begin{equation}\label{eq:generating-system}
\wPhi = \coprod_{i=1}^n \Phi_i,
\end{equation}
which generally does not form a basis for \(V\), as its elements may
be linearly dependent (or even duplicated). We represent the solution
\(u\) as a linear combination of elements of \( \wPhi \) and, using \(
\wPhi \) instead of \( \Phi \), rewrite \eqref{eq:system-V}  as a
linear system, which may be singular. Therefore, such a linear system
cannot be solved by standard direct methods but can be handled by
iterative methods. In particular, 
it is known that Krylov methods perform well with
singular systems, provided that the right-hand side lies in the range
of the system matrix, see
e.g. \cite{Kaasschieter1988,Ipsen1998,Reichel2005}.   At the same time,
preconditioned Krylov methods  are currently the most efficient
solvers when a good preconditioner is available.  Following the domain
decomposition approaches, and in particular the  subspace
correction  paradigm, preconditioners can be constructed from the
approximate solvers of the restrictions of \(A\) to each \(V_i\). Our strategy is to
carefully select the \(V_i\), for example, we could consider \(V_i\)
with a tensorial structure, that can be exploited for (possibly
inexact) fast solvers. 

The framework outlined above is unconventional but not unprecedented
in the literature. Griebel's 1994 paper \cite{griebel1994multilevel}
in the context of finite elements proposed abandoning the basis in
favor of a generating system as in (3) to simplify preconditioner
construction. Subsequent research has utilized generating systems in
wavelet methods and sparse grid contexts.

We think this framework is potentially very beneficial for
isogeometric analysis (IGA). IGA, introduced in 2005
\cite{hughes2005,cottrell2009isogeometric}, aims to bridge the gap
between numerical simulations and Computer-Aided Design
(CAD). Specifically, IGA follows the isoparametric paradigm and adopts
B-splines (or NURBS and other spline extensions) as basis
functions. This allows the unknown fields in numerical simulations to
be represented by splines, just as splines are used to parameterize
geometry in CAD. 
In both IGA and CAD, each patch is typically derived from a tensor-product basis. However, spline extensions for mesh refinement such as
Hierarchical B-splines
\cite{forsey1995surface,buffa2016adaptive},
T-splines \cite{sederberg2003t,MR3084741}, and
LR-splines \cite{MR3019748,MR3146870}, feature a local
tensor-product structure. These spaces may be seen as unions of tensor-product subspaces \(V_i\), but often  identifying a global basis
\(\Phi\) is challenging, while  working with subspace bases \(\Phi_i\)
and their disjoint union \( \wPhi \) is a significant simplification.
Furthermore,  using \(\wPhi\) for representing the unknown, we may
retain the computational advantages related to the  tensor-product
structure of the \(V_i\), e.g., the possibility of fast local solvers.

The structure of this work is the following: Section~2 sets the notation, Sections~3--4 reframe Krylov
methods and subspace correction preconditioners in the proposed
setting, Section~5 contains a numerical benchmark in the framework of
IGA and Section~6 contains our conclusions.


\section{Abstract setting}

We assume  that \(V\) is a finite dimensional vector space
with scalar product $(\cdot,\cdot)_V$ and norm \(\n{\cdot }_V\).
Given the  spaces $V_i$, for $i=1, \ldots,
n$, and $V$, as in \eqref{eq:V}, 
we   introduce the extended space 
\begin{equation}[product-space]
    \wV:=\prod_{i=1}^n V_i,
\end{equation}
endowed with the norm
$
\n{\wv}_\wV:=\left(\sum_{i=1}^n	\n{v_i}_V^2\right)^{1/2}$	, for
all $ 	\wv=(v_1,\dots,v_n) \in \wV
$, and   reformulate the abstract problem \eqref{eq:system-V} on
\(\wV\), which will lead to a linear system for the representation of
the solution with respect to  $\wPhi$ defined in
\eqref{eq:generating-system}, the basis of $ \wV $,  instead of  $\Phi$, the basis of $ V $.     {For this purpose,  each $\wv \in \wV$ is associated to
  a $v \in V$ via the following sum operator  \(S:\wV\to V\) 
\begin{equation}[S-definition]
	S\wv=S(v_1,\dots,v_n)=\sum_{i=1}^n v_i\in V
      \end{equation}   
Its  adjoint  \(S^*:V^*\rightarrow \wV^*=\prod_i V_i^*\) is then
injective and associates to \(f\) the tuple of the restrictions of
\(f\) to the \(V_i\), that is $S^* f=(f|_{V_1},\dots,f|_{V_n})$.
The \emph{extended problem} is the pull-back of {problem} 
\eqref{eq:system-V} on \(\wV\), and reads  
\begin{equation}[system-wV]
\wA \wu=\wf,
\end{equation}
where \(\wf:=S^*f\) and \(\wA:\wV\rightarrow \wV^*\) is defined by 
\begin{equation}[wA]
\wA:= S^* A S
\end{equation} 
that is 
\begin{equation}[wA-bis]
\wA (u_1,\dots,u_n)=(w|_{V_1},\dots,w|_{V_n}), \text{ with } w=\sum_{i=1}^n A u_i.
\end{equation}

The \emph{extended problem} is equivalent to problem  \eqref{eq:system-V}  as stated in the next result.

  \begin{theorem}\label{thm:system-equivalent}
    If $A:V\rightarrow V^*$ is an isomorphism, then
    \begin{equation}[ker-wA]
      \ker \wA = \ker S,
    \end{equation}
    and  
    \begin{equation}[wA-and-A-equivalence]
      Au=f \text{  if and only if } u=S \{\wu\in \wV:\wA\wu=\wf\}.
    \end{equation}
  \end{theorem}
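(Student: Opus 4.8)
The plan is to prove the two claims in sequence, deriving the second from the first together with the hypothesis that $A$ is an isomorphism. First I would establish \eqref{eq:ker-wA}. The inclusion $\ker S \subseteq \ker \wA$ is immediate from the definition \eqref{eq:wA}: if $S\wu = 0$ then $\wA\wu = S^*A S\wu = S^* A\, 0 = 0$. For the reverse inclusion, suppose $\wA\wu = 0$, i.e. $S^* A S \wu = 0$. Here is where the isomorphism hypothesis on $A$ enters: $AS\wu \in V^*$, and $S^*$ is injective (as noted in the text, since $S$ is surjective onto $V$ because of \eqref{eq:V}), so $S^* A S\wu = 0$ forces $A S\wu = 0$; since $A$ is an isomorphism, $S\wu = 0$, i.e. $\wu \in \ker S$. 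This gives \eqref{eq:ker-wA}.

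Next I would prove \eqref{eq:wA-and-A-equivalence}. For the forward direction, assume $Au = f$. I need to show that $u = S\{\wu \in \wV : \wA\wu = \wf\}$, i.e. that the set $\{\wu : \wA\wu = \wf\}$ is nonempty and its image under $S$ is exactly $\{u\}$. Nonemptiness: since $V = \sum_i V_i$, the operator $S$ is surjective, so there exists $\wu_0 \in \wV$ with $S\wu_0 = u$; then $\wA\wu_0 = S^*AS\wu_0 = S^*Au = S^*f = \wf$, so $\wu_0$ is a solution of the extended problem. That the image is exactly $\{u\}$: any other solution $\wu$ satisfies $\wA(\wu - \wu_0) = 0$, hence by \eqref{eq:ker-wA} $\wu - \wu_0 \in \ker S$, so $S\wu = S\wu_0 = u$. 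Thus $S\{\wu : \wA\wu = \wf\} = \{u\}$.

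For the converse direction of \eqref{eq:wA-and-A-equivalence}, suppose $u = S\{\wu \in \wV : \wA\wu = \wf\}$; in particular this presupposes the solution set of the extended problem is nonempty, so pick $\wu$ with $\wA\wu = \wf$ and $u = S\wu$. Then $S^*Au = S^*AS\wu = \wA\wu = \wf = S^*f$, and injectivity of $S^*$ gives $Au = f$. This closes the equivalence.

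The main obstacle, such as it is, is bookkeeping rather than depth: one must be careful about the meaning of the set-valued equality in \eqref{eq:wA-and-A-equivalence} — namely that it asserts simultaneously that the extended solution set is nonempty and that $S$ collapses it to the single point $u$ — and one must invoke surjectivity of $S$ (from \eqref{eq:V}) and injectivity of $S^*$ (its adjoint) at the right moments. The only genuinely mathematical input is that an injective $S^*$ lets us cancel it on the left, and that is exactly what the hypothesis ``$A$ is an isomorphism'' is combined with to get \eqref{eq:ker-wA}; everything else is formal manipulation of the relations $\wA = S^*AS$, $\wf = S^*f$.
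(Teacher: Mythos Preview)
Your proof is correct and follows the same approach as the paper's, which also relies on surjectivity of $S$, injectivity of $S^*$, and the isomorphism hypothesis on $A$; you have simply spelled out in full the steps that the paper compresses into a single chain of equivalences. Your explicit treatment of nonemptiness of the extended solution set and of the set-valued meaning of \eqref{eq:wA-and-A-equivalence} is a welcome clarification that the paper leaves implicit.
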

  
\begin{proof}
	Recalling that   \(S\) is surjective and  \(S^*\) is
        injective,  \eqref{eq:ker-wA} follows. Thanks to \(S^*\) being
        injective  and \(A\) an isomorphism, we also have
	\[ \wA\wu = \wf \iff S^* (AS\wu ) = S^* f \iff A S\wu=f \iff u = S\wu,\]
       which is \eqref{eq:wA-and-A-equivalence}.
\end{proof}
\section{Krylov methods}

In this section we revise and adapt to our framework the theory of
preconditioned Krylov methods,
with emphasis on  the Conjugate Gradient (CG) method,
following \cite{gunnel2014note}.
In particular, we  relate Krylov methods for the extended  problem
\eqref{eq:system-wV} to  Krylov method for the original
problem\eqref{eq:system-V}.

Assume that  a left preconditioner   \(\wB:\wV^*\to\wV\) for the extended problem
\eqref{eq:system-wV} is given. We associate to it a left preconditioner  \(B:V^*\rightarrow V\) for the
original problem \eqref{eq:system-V} as follows:
\begin{equation}
  \label{eq:defn-B-from-wB}
  B= S\wB S^*;
\end{equation}
then,  the following  commuting diagram holds
\begin{equation}[operators-cd]
    \begin{tikzcd}[sep=2cm,every label/.append style={font=\normalsize}]
        \wV \arrow[r, "\wA"] \arrow[d, "S"] & \wV^* \arrow[r, "\wB"] & \wV \arrow[d, "S"]
        \\ V\arrow[r, "A"] & V^* \arrow[r, "B"] \arrow[u, "S^*"] & V
    \end{tikzcd}
  \end{equation}
  The original and extended  (left) preconditioned operators are  $T:=BA $  and $ \wT:=\wB\wA
  $, respectively,  such that
  \begin{equation}[TS=SwT] TS=S\wT.
\end{equation}
Finally, the  (left) preconditioned extended
problem reads
\begin{equation}
  \label{eq:prec-ext-prbl}
  \wT \wu = \wB \wf. 
\end{equation}

Being $\wT:\wV\to \wV$  an
endomorphism on $\wV$ (though possibly singular) we can employ Krylov
methods for solving \eqref{eq:prec-ext-prbl}. Denoting $\wy = \wB \wf$,   Krylov methods provide 
at each step \(s\) an approximate solution \(\wu_s\) in the Krylov
subspace
\begin{equation}
  \label{eq:Krylov-space}
  	K_s(\wT,\wy):=\SPAN \{ \wy,\wT\wy,\dots,\wT^{s-1}\wy\},
\end{equation}
such that a suitable optimality condition is satisfied.

The convergence of Krylov methods in the singular case is not
guaranteed  in general:  it may happen  that the solution
$\wx$ of $\wT\wx = \wy $ does not belong to any Krylov space
$	K_s(\wT,\wy)$. Fortunately, this does not happen in our case under
reasonable conditions, as stated in the  following proposition. 
\begin{proposition}\label{prop:krylov-convergence}
  Under the assumption that \(T\) is an isomorphisms, for all \(f
  \in V\)  the system \eqref{eq:prec-ext-prbl} admits a Krylov solution.
\end{proposition}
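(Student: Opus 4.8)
The plan is to show that $\wT$, though possibly singular, is only \emph{mildly} so: its kernel and its range meet only at $0$, so that $\wV=\ker\wT\oplus\range\wT$ and $\wT$ restricts to an automorphism of $\range\wT$. Combined with the fact that the right-hand side $\wy:=\wB\wf$ lies in $\range\wT$, this is exactly the ``index one'' situation in which Krylov methods for a singular system do not break down (see e.g.\ \cite{Kaasschieter1988,Ipsen1998,Reichel2005,gunnel2014note}); in fact I would make the Krylov polynomial explicit by borrowing it from the (nonsingular) original problem through the intertwining map $S$. As a preliminary observation, note that since $V$ is finite dimensional with $\dim V=\dim V^{*}$, the hypothesis that $T=BA$ be an isomorphism already forces $A$ to be injective, hence an isomorphism; so Theorem~\ref{thm:system-equivalent} applies, $S$ is onto $V$, and $A$ is onto $V^{*}$.

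The first real step is the structure of $\wT$. Writing $\wT=\wB\,S^{*}AS$ (from \eqref{eq:wA}), one immediately gets $\ker S\subseteq\ker\wT$; conversely, if $\wT\wu=0$ then, by $TS=S\wT$ of \eqref{eq:TS=SwT} and the injectivity of $T$, we get $S\wu=0$, so $\ker\wT=\ker S$. Using that $A$ and $S$ are onto, $\range\wT=\wB\,S^{*}(V^{*})$; and if $z=\wB\,S^{*}g$ lies in $\ker\wT=\ker S$, then $0=Sz=S\wB\,S^{*}g=Bg$ (recalling $B=S\wB\,S^{*}$, \eqref{eq:defn-B-from-wB}), so, writing $g=Av$, $Tv=BAv=0$, whence $v=0$ and $z=0$. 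Thus $\ker\wT\cap\range\wT=\{0\}$, and rank--nullity gives $\wV=\ker\wT\oplus\range\wT$ with $\wT$ an automorphism of the $\wT$-invariant subspace $\range\wT$. I expect this index-one property to be the crux of the argument; the rest is bookkeeping with the commuting diagram \eqref{eq:operators-cd}.

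Finally, taking $g=f$ in the computation above shows $\wy=\wB\wf=\wB\,S^{*}f\in\range\wT$, i.e.\ the preconditioned extended system is consistent. Since $T$ is an isomorphism, Cayley--Hamilton provides a polynomial $r$ with $T\,r(T)\,Bf=Bf$; set $\wx:=r(\wT)\,\wy$, which lies in some Krylov subspace $K_{s}(\wT,\wy)$ and, since $\wy\in\range\wT$, in $\range\wT$. Applying $S$ and using $S\wT^{k}=T^{k}S$ together with $S\wy=S\wB\,S^{*}f=Bf$, we obtain $S(\wT\wx)=T\,r(T)\,Bf=Bf=S\wy$, so $\wT\wx-\wy\in\ker S=\ker\wT$; but $\wT\wx-\wy\in\range\wT$ too, so $\wT\wx=\wy$ by the decomposition above. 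Hence $\wx\in K_{s}(\wT,\wy)$ solves \eqref{eq:prec-ext-prbl}, which is what ``admits a Krylov solution'' means. A more self-contained variant skips $r$: $\wy\in\range\wT$ and $\ker\wT\cap\range\wT=\{0\}$ force the minimal polynomial $p$ of $\wy$ relative to $\wT$ to satisfy $p(0)\neq 0$, and then $-p(0)^{-1}\sum_{k\ge 1}p_{k}\,\wT^{\,k-1}\wy$ is the desired solution.
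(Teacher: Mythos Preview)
Your proof is correct and rests on the same key fact as the paper's---the ``index one'' property of $\wT$---but the execution differs. The paper proves the equivalent formulation $\ker\wT^{2}=\ker\wT$ via the short chain
\[
\ker\wT^{2}=\ker S\wT=\ker TS=\ker S=\ker\wT,
\]
using \eqref{eq:TS=SwT} and the injectivity of $T$, and then cites an external result to conclude that any $\wz\in\range\wT$ admits a Krylov solution. You instead show directly that $\ker\wT\cap\range\wT=\{0\}$ by computing $\range\wT=\wB S^{*}(V^{*})$ and using the invertibility of $B$, and then construct the Krylov solution explicitly by transporting the Cayley--Hamilton polynomial of $T$ through $S$ (or, in your variant, via the minimal polynomial of $\wy$). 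Your route is more self-contained---no black-box citation, and you spell out two points the paper leaves implicit: that $A$ must itself be an isomorphism (so $AS$ is onto), and that therefore $\wy=\wB S^{*}f\in\range\wT$. The paper's argument for the kernel identity is a bit slicker, but yours gives more insight into why the Krylov iteration actually terminates.
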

\begin{proof}
The kernel of a composition always contains the kernel of the first map
and so by definition $$\ker \wT=\ker(\wB S^* A S )\supseteq \ker S.$$
Similarly, since \(T\) is an isomorphism and using \eqref{eq:TS=SwT}
we have 
\[
  \ker S= \ker T S= \ker  S\wT \supseteq \ker\wT.
\]
Therefore
\begin{equation}
  \label{eq:kerTbar}
  \ker \wT=\ker S.
\end{equation}
From \eqref{eq:kerTbar}, then \eqref{eq:TS=SwT}, using that
\(T\) is  an isomorphism, and finally \eqref{eq:kerTbar}  again  we have
\[
  \ker \wT^2=\ker S\wT = \ker TS=\ker S=\ker \wT,
\]
i.e., \(\ker\wT^2=\ker \wT\).
This guarantees that \cite[Theorem 2]{krylov-idea}
 $\wT \wx = \wz$ admits a Krylov solution for any \(\wz \in \range{(\wT)}\).
\end{proof}

The assumption of Theorem \ref{prop:krylov-convergence} is natural in
our framework.
In particular, $A$ is taken non-singular
by assumption,
$\wB$ is often coercive by construction
\begin{displaymath}
  \forall \wg \in \wV^*, \quad \wg \neq 0 \Rightarrow \langle \wB \wg , \wg
  \rangle > 0;
\end{displaymath}
then  $B$ is also coercive (since $ \langle B g , g
  \rangle  = \langle \wB S^* g , S^* g   \rangle  $, and $S^* $ injective) and 
  thus \(B\) is an isomorphism, which makes $T$ an
  isomorphism as well.


  Theorem~\ref{prop:krylov-convergence} 
assumes exact arithmetic and the exact calculation of the matrix and
right hand side of the preconditioned extended problem
\eqref{eq:prec-ext-prbl}. However,  numerical errors typically occur
and, when $\wT$ is singular, they may lead to the unsolvability of the
perturbed system. Assume, for example, that the
left-hand side of  $\wT$  is exactly computed (up to machine
precision) and the right-hand-side is affected by a quadrature
error $ \we$, that is the system reads $\wA \wu = \wf+ \we$. 
Let's further decompose $\we$  as $     \we=
  \we_{\range}+\we_{\mathscr{K}} $, with   $\we_{\range}$   in  the
  range of \(S^*\). The effect of   $\we_{\range}$  can be analyzed by means of standard methods such as the
Strang lemma, while $\we_{\mathscr{K}}$, which  is  outside the range
of \(S^*\) and then of $\wA$,
prevents  the existence of the solution of the perturbed problem. In this situation,
it is required that the stopping criterion of the Krylov iteration is  set with a
threshold on the residual norm that cannot be smaller than the norm of  $\we_{\mathscr{K}}$.

\subsection{CG and MINRES}
If $A$ and $\wB$ are self-adjoint and coercive 
\begin{align}
  \label{eq:sym-pos-A} \langle A v , w  \rangle=\langle A w , v  \rangle,&\qquad \langle A v , v  \rangle >0,\, &&\forall v,w\in V,
  \\\label{eq:sym-pos-wB}\langle \wB \wv , \ww  \rangle=\langle \wB \ww , \wv  \rangle,&\qquad  \langle \wB \wv , \wv  \rangle >0,\, &&\forall \wv,\ww\in \wV^*.
\end{align}
then also $\wA$ is self-adjoint and $B$ is both self-adjoint and coercive 
\begin{align}
  \label{eq:sym-wA} \langle \wA \wv , \ww  \rangle=\langle \wA \ww , \wv  \rangle,& &&\forall \wv,\ww\in \wV,
  \\\label{eq:sym-B}\langle B v , w  \rangle=\langle B w , v  \rangle,&\qquad  \langle B v , v  \rangle >0,\, &&\forall v,w\in V^*.
\end{align}

We can indeed apply CG, for which the convergence histories of the
original and extended problem coincide.
\begin{theorem}\label{thm:krylov-iteration-CG} Under the assumptions above,
  let $u_s$ and $\wu_s$ be the  iterates at step \(s\)  of the
  (preconditioned) CG method applied 
  to the original problem \eqref{eq:system-V} and the extended problem
  \eqref{eq:system-wV}, respectively. Then for all $s$,
  \begin{equation}
    \label{eq:CG-equiv}
    S\wu_s= u_s
  \end{equation}
  \begin{equation}
    \label{eq:CG-equiv-norm}
    \n{\we_s}_{\wA}=\n{e_s}_A, 
  \end{equation}
where \(\we_s\in \wV\)   is  the
error at step \(s\) of the CG method applied to the extended problem,  i.e.,
the difference between a chosen solution \(\wu\) and \(\wu_s\),
while \(e_s\in V\) is  the corresponding error obtained by the CG
method applied to the original problem, and  $\n{\cdot}_{A} =
\langle A \cdot , \cdot  \rangle$ and  $\n{\cdot}_{\wA} =
\langle \wA \cdot , \cdot  \rangle$ denote the norms or seminorms
associated to $A$ and $\wA$,  respectively.
\end{theorem}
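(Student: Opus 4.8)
The plan is to prove \eqref{eq:CG-equiv} by induction on $s$, showing that the sum operator $S$ intertwines the entire CG iteration on $\wV$ with the CG iteration on $V$, and then deduce \eqref{eq:CG-equiv-norm} as an immediate consequence. First I would set up the two CG recursions side by side, using the preconditioners $\wB$ and $B = S\wB S^*$ and the operators $\wT = \wB\wA$, $T = BA$. The key algebraic facts I would lean on are the commuting relation \eqref{eq:TS=SwT}, i.e.\ $TS = S\wT$, together with the companion identity for residuals: if $\wr_s = \wf - \wA\wu_s$ is the extended (un-preconditioned) residual and $r_s = f - Au_s$ the original one, then $S^*$ maps one to the other in the sense that $S\wB\wr_s$ and $B r_s$ agree whenever $S\wu_s = u_s$, since $S\wB\wr_s = S\wB(S^*f - \wA\wu_s) = S\wB S^*(f - AS\wu_s) = B(f - Au_s) = Br_s$, using $\wA = S^*AS$ and $\wf = S^*f$.

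The induction itself proceeds in the standard way. For the base case, both methods start from $\wu_0$ and $u_0$ with $S\wu_0 = u_0$ (one takes $\wu_0 = 0$, $u_0 = 0$, or more generally consistent initial data), so the initial preconditioned residuals / search directions satisfy $S\wp_0 = p_0$. For the inductive step, assuming $S\wu_s = u_s$, $S\wp_s = p_s$, and the preconditioned residuals correspond, I would check that the CG scalars coincide: the step length $\wa_s = \langle \wB\wr_s, \wr_s\rangle / \langle \wA\wp_s, \wp_s\rangle$ equals $a_s = \langle Br_s, r_s\rangle/\langle Ap_s, p_s\rangle$ because the numerator satisfies $\langle \wB\wr_s,\wr_s\rangle = \langle S\wB\wr_s, f-Au_s\rangle$ — wait, more carefully, $\langle \wB\wr_s, \wr_s\rangle = \langle \wB\wr_s, S^*r_s\rangle = \langle S\wB\wr_s, r_s\rangle = \langle Br_s, r_s\rangle$, and the denominator $\langle \wA\wp_s,\wp_s\rangle = \langle S^*AS\wp_s, \wp_s\rangle = \langle AS\wp_s, S\wp_s\rangle = \langle Ap_s, p_s\rangle$. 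Hence the updates $\wu_{s+1} = \wu_s + \wa_s\wp_s$ and $\wr_{s+1} = \wr_s - \wa_s\wA\wp_s$ push forward under $S$ (resp.\ $S^*$-compatibly) to the original updates, and similarly the Gram–Schmidt coefficient $\wb_s$ for the new search direction matches $b_s$, giving $S\wp_{s+1} = p_{s+1}$ and $S\wu_{s+1} = u_{s+1}$, closing the induction.

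Finally, for \eqref{eq:CG-equiv-norm}, pick a solution $\wu$ of $\wA\wu = \wf$; by Theorem~\ref{thm:system-equivalent} its image $u = S\wu$ solves $Au = f$, and it is the corresponding solution used on the $V$ side. Then $\we_s = \wu - \wu_s$ and $e_s = u - u_s = S\wu - S\wu_s = S\we_s$, so
\begin{equation*}
  \n{\we_s}_{\wA}^2 = \langle \wA\we_s, \we_s\rangle = \langle S^*AS\we_s, \we_s\rangle = \langle AS\we_s, S\we_s\rangle = \langle Ae_s, e_s\rangle = \n{e_s}_A^2,
\end{equation*}
which is the claimed identity (and also shows the $\wA$-seminorm of $\we_s$ does not depend on which solution $\wu$ was chosen, since different choices differ by an element of $\ker\wA \subseteq \ker S$, consistent with $\wA$ being only positive semidefinite).

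The main obstacle, and the only place that needs genuine care rather than bookkeeping, is the semidefiniteness of $\wA$: one must be sure the CG recursion on $\wV$ never breaks down, i.e.\ that $\langle \wA\wp_s, \wp_s\rangle \neq 0$ at every step before convergence. This is exactly what the identity $\langle \wA\wp_s,\wp_s\rangle = \langle Ap_s, p_s\rangle$ buys us: since $A$ is coercive and CG on the original (nonsingular) problem is well defined, the extended denominators are never zero until $p_s = 0$, at which point both methods have terminated. Equivalently, one checks $\wp_s \notin \ker\wA = \ker S$ for $s$ below the termination index. I would state this explicitly to justify that the two iterations have the same length and that \eqref{eq:CG-equiv}–\eqref{eq:CG-equiv-norm} hold for \emph{all} $s$ up to and including termination.
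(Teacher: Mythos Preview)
Your argument is correct, but it proceeds along a genuinely different route from the paper. The paper does not induct on the CG recursion at all: it first notes that $T^sS=S\wT^s$ for every $s$, hence $T^sBf=S\wT^s\wB\wf$, so the two Krylov spaces are related by $K_s(T,Bf)=S\,K_s(\wT,\wB\wf)$; then it invokes the variational characterisation of CG, namely that $u_s$ minimises $\n{u-\cdot}_A$ over $K_s(T,Bf)$ and $\wu_s$ minimises $\n{\wu-\cdot}_{\wA}$ over $K_s(\wT,\wB\wf)$, and concludes from $\n{\wv}_{\wA}=\n{S\wv}_A$ that $S\wu_s=u_s$ and $\n{\we_s}_{\wA}=\n{e_s}_A$. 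This is shorter and more conceptual, and it makes the later MINRES theorem an immediate variant (same Krylov correspondence, different functional being minimised). Your step-by-step intertwining of the recursion is more laborious but has a real payoff: in the singular setting the $\wA$-minimiser over the extended Krylov space is not unique, and the paper's optimality argument only pins down $S\wu_s$, tacitly assuming the algorithm itself is well defined. Your explicit check that $\langle\wA\wp_s,\wp_s\rangle=\langle Ap_s,p_s\rangle>0$ until termination closes that gap and confirms that the actual CG iteration on $\wV$ never breaks down, which is a point worth making.
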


\begin{proof}

 For any positive integer $s$, we have by definition of $T$ and $\wT$
 that
 \begin{displaymath}
    T^sS=   S \wT^s, 
  \end{displaymath}
 and then

 \begin{displaymath}
    T^sBf =   S \wT^s\wB \wf, 
  \end{displaymath}
yielding the correspondence of  the Krylov spaces generated in \(V\) 
and \(\wV\):
\begin{equation}
  \label{eq:Krylov-relation}
  K_s(T,Bf)  = S K_s(\wT,\wB \wf).
\end{equation}
By construction any CG solution of \eqref{eq:system-V},
respectively,\eqref{eq:system-wV} fulfill the following optimality condition 
\[  u_s = \argmin_{v \in K_s(T,Bf) } \n{u - v }_{A},\quad\text{and}\quad \wu_s \in \argmin_{\wv \in K_s(\wT,\wB \wf)} \n{\wu - \wv }_{\wA}
\]
By \eqref{eq:Krylov-relation} and
\[
\n{\wv}_{\wA}^2=\langle \wA \wv , \wv  \rangle = \langle A S \wv ,
S \wv  \rangle  = \n{S \wv}_{A}^2.
\]
we have $S\wu_s=u_s$, i.e., \eqref{eq:CG-equiv} and \eqref{eq:CG-equiv-norm}.

\end{proof}

In a similar way, but without the need of coerciviness of
$\langle A \cdot , \cdot  \rangle $, i.e., assuming only that \(A\) is
a self-adjoint isomorphism satisfying
\begin{equation}
  \label{eq:sym-A} \langle A v , w  \rangle=\langle A w , v  \rangle,\qquad \forall v,w\in V,
\end{equation}
we can still apply MINRES and have the following equivalence.

\begin{theorem}\label{thm:krylov-iteration-MINRES} Under the assumptions \eqref{eq:sym-A},
  \eqref{eq:sym-pos-wB},  let $u_s$ and $\wu_s$ be the  iterates at step \(s\)  of the
  (preconditioned) MINRES method applied 
  to the original problem \eqref{eq:system-V} and the extended problem
  \eqref{eq:system-wV}, respectively. Then for all $s$,
  \begin{equation}
    \label{eq:MINRES-equiv}
    S\wu_s= u_s
  \end{equation}
  and 
  \begin{equation}
    \label{eq:MINRES-equiv-norm}
   \n{\wr_s}_{\wB}= \n{r_s}_B,
  \end{equation}
where \(\wr_s\in \wV^*\) is the residual at step \(s\) of the MINRES
method applied to the extended problem, \(r_s\in V^*\)  is the residual of the MINRES method applied to the original problem  and  $\n{\cdot}_{B} =
\langle B \cdot , \cdot  \rangle$ and  $\n{\cdot}_{\wB} =
\langle \wB \cdot , \cdot  \rangle$ denote the norms 
associated to $B$ and $\wB$,  respectively.
\end{theorem}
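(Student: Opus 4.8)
The plan is to mirror the CG proof of Theorem~\ref{thm:krylov-iteration-CG}, replacing the $A$-norm optimality of CG with the residual-norm optimality of MINRES. First I would record that, exactly as in the CG case, the commuting relations \eqref{eq:TS=SwT} give $T^s S = S\wT^s$ for all $s$, and hence $T^s \wB\,\text{(wait)}$---more precisely $T^s B f = S\wT^s \wB\wf$---so that the Krylov spaces correspond:
\begin{equation}
  K_s(T,Bf) = S\,K_s(\wT,\wB\wf).
\end{equation}
This is the structural backbone and requires no new idea beyond what was used for CG.

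Next I would write the MINRES optimality condition in the appropriate form. Preconditioned MINRES with preconditioner $\wB$ applied to $\wT\wu = \wB\wf$ minimizes the $\wB^{-1}$-norm of the (unpreconditioned) residual $\wA\wu_s - \wf$, equivalently it minimizes $\n{\wr_s}_{\wB}$ where $\wr_s := \wB(\wf - \wA\wu_s) = \wB\wf - \wT\wu_s$ is the preconditioned residual and $\n{\cdot}_{\wB}$ is the seminorm from $\wB$; concretely
\[
  \wu_s \in \argmin_{\wv\in K_s(\wT,\wB\wf)} \n{\wB\wf - \wT\wv}_{\wB},
\]
and analogously $u_s \in \argmin_{v\in K_s(T,Bf)} \n{Bf - Tv}_{B}$. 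The key algebraic identity to establish is that $S$ is an isometry between these two seminorms: for any $\wv\in\wV$,
\[
  \n{\wr}_{\wB}^2 = \langle \wB\wr,\wr\rangle
\]
should reduce, via $B = S\wB S^*$, $\wA = S^*AS$, $\wf = S^*f$, to $\langle B(f-AS\wv),\, f-AS\wv\rangle = \n{r}_B^2$ with $v = S\wv$; the point is that the preconditioned residual in $V$ is the image under $S$ of a preconditioned residual in $\wV$, and the $\wB$-seminorm of the latter equals the $B$-seminorm of the former because $\langle \wB S^* g, S^* g\rangle = \langle Bg,g\rangle$.

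Combining the Krylov-space correspondence with this isometry, the extended minimization problem pushes forward under $S$ to the original one, so any minimizer $\wu_s$ maps to a minimizer $S\wu_s$ of the original problem; since preconditioned MINRES iterates are uniquely determined (the residual is unique even if the iterate sequence could a priori differ in the kernel, but the standard construction pins it down) we get $S\wu_s = u_s$ and $\n{\wr_s}_{\wB} = \n{r_s}_B$.

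The main obstacle I anticipate is bookkeeping around the preconditioner and seminorms rather than anything deep: one must be careful that, because $\wB$ may only be coercive on $\wV^*$ while $\wT$ is singular, $\n{\cdot}_{\wB}$ is genuinely the relevant quantity MINRES minimizes, and that the residuals $r_s$, $\wr_s$ are correctly identified as \emph{preconditioned} residuals so that $r_s = S\wr_s$ holds; once the identity $\langle \wB S^* g, S^* g\rangle = \langle Bg,g\rangle$ (already used in the text to transfer coercivity) is invoked, the isometry and hence the equivalence follow just as in the CG case, so I would keep the argument short and parallel to the proof of Theorem~\ref{thm:krylov-iteration-CG}.
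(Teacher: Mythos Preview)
Your plan matches the paper's proof almost exactly: Krylov-space correspondence via $S$, the MINRES residual-minimization characterization, and the isometry $\langle \wB S^* g, S^* g\rangle = \langle Bg,g\rangle$. The paper writes the optimality directly as $u_s = \argmin_{v}\n{Av-f}_{B}$ and $\wu_s \in \argmin_{\wv}\n{\wA\wv-\wf}_{\wB}$, then uses $\wA\wv-\wf = S^*(AS\wv-f)$ together with $\n{S^* g}_{\wB}=\n{g}_B$.

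One bookkeeping correction, which you yourself flagged as the likely sticking point: the statement places $\wr_s \in \wV^*$, so it is the \emph{unpreconditioned} residual $\wf - \wA\wu_s$, not $\wB(\wf - \wA\wu_s)\in\wV$ as you define it; consequently the intertwining relation is $\wr_s = S^* r_s$ (via $\wA\wv-\wf = S^*(AS\wv-f)$), not $r_s = S\wr_s$. With that fix your argument is line-for-line the paper's, and the worry about $\n{\cdot}_{\wB}$ being only a seminorm evaporates, since on $\wV^*$ it is a genuine norm by the assumed coercivity of $\wB$.
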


\begin{proof}
As shown in \eqref{eq:Krylov-relation} the Krylov subspaces correspond by \(S\).
Similarly to the CG, any MINRES solution of \eqref{eq:system-V},
respectively,\eqref{eq:system-wV} fulfill the following optimality conditions
\[
    u_s = \argmin_{v \in K_s(T,Bf) } \n{A v - f }_{B},\quad\text{and}\quad
\wu_s \in \argmin_{\wv \in K_s(\wT,\wB \wf) } \n{\wA \wv - \wf }_{\wB}.
\]
By \eqref{eq:Krylov-relation}, \(\n{S^*v }_{\wB}= \n{v}_{B}\)
and
\[
\wA\wv-\wf=S^*(AS\wv -f)
\]
we have $S\wu_s=u_s$, i.e., this proves \eqref{eq:MINRES-equiv} and \eqref{eq:MINRES-equiv-norm}.
\end{proof}

\subsection{GMRES}

The standard GMRES method minimizes at
each step the euclidean norm of the residual.
It follows that the iterate are basis dependent and differ between the
original and the extended problem.
At the same time, the behavior of GMRES mainly depends on the spectrum of the
preconditioned problem, which is the same for the two problems,
original and extended, up to the possible kernel of the latter.

\begin{theorem}\label{thm:preconditioner-spectrum}
Under the assumption that  \(T\) is an isomorphism, the spectrum
$\spectrum(\wT) $  of \(\wT\), and the spectrum $\spectrum(T) $   of \(T\) are related by
	\[\spectrum(\wT)\setminus\{0\}=\spectrum(T).\]
\end{theorem}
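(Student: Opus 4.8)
The plan is to exploit the intertwining relation $TS=S\wT$ from \eqref{eq:TS=SwT} together with the identity $\ker\wT=\ker S$ established in the proof of Proposition~\ref{prop:krylov-convergence}. The strategy is to show the two spectral inclusions separately, using $S$ to transport eigenvectors back and forth between $V$ and $\wV$, and to handle the zero eigenvalue by a dimension count.

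\begin{proof}
Since $T$ is an isomorphism, $0\notin\spectrum(T)$, so it suffices to show $\spectrum(\wT)\setminus\{0\}=\spectrum(T)$.

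First I would show $\spectrum(T)\subseteq\spectrum(\wT)$. Let $\lambda\in\spectrum(T)$ with eigenvector $v\in V$, $v\neq0$, so $Tv=\lambda v$. Because $S$ is surjective, pick $\wv\in\wV$ with $S\wv=v$; then $S\wT\wv=TS\wv=Tv=\lambda v=\lambda S\wv=S(\lambda\wv)$, hence $\wT\wv-\lambda\wv\in\ker S=\ker\wT$. Applying $\wT$ and using $\wT(\ker\wT)=0$ gives $\wT^2\wv=\lambda\wT\wv$, i.e. $\wT(\wT\wv-\lambda\wv)=0$. If $\wT\wv-\lambda\wv\neq0$ this already exhibits $\lambda$ as an eigenvalue of $\wT$ (note $\lambda\neq0$ since $v\notin\ker T$ forces $v\notin S(\ker\wT)$... more simply, $\lambda\in\spectrum(T)$ and $T$ is an isomorphism so $\lambda\neq0$); if instead $\wT\wv-\lambda\wv=0$ then $\wv$ itself is an eigenvector of $\wT$ for $\lambda$. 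Either way $\lambda\in\spectrum(\wT)$.

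Next I would show $\spectrum(\wT)\setminus\{0\}\subseteq\spectrum(T)$. Let $\lambda\neq0$ be an eigenvalue of $\wT$ with eigenvector $\wv\in\wV$, $\wv\neq0$, so $\wT\wv=\lambda\wv$. Set $v:=S\wv$. If $v=0$ then $\wv\in\ker S=\ker\wT$, so $\wT\wv=0=\lambda\wv$, forcing $\wv=0$ since $\lambda\neq0$, a contradiction; hence $v\neq0$. Then $Tv=TS\wv=S\wT\wv=S(\lambda\wv)=\lambda v$, so $\lambda\in\spectrum(T)$.

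Combining the two inclusions gives $\spectrum(\wT)\setminus\{0\}=\spectrum(T)$, as claimed.
\end{proof}

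The main subtlety I anticipate is the first inclusion: because $\wT$ may be singular, lifting an eigenvector of $T$ through $S$ does not immediately yield an eigenvector of $\wT$, only a vector $\wv$ with $(\wT-\lambda)\wv\in\ker\wT$; the identity $\ker\wT^2=\ker\wT$ from the proof of Proposition~\ref{prop:krylov-convergence} is exactly what rules out a genuine Jordan block at $\lambda\neq0$ and lets one conclude. (One could alternatively argue more slickly by decomposing $\wV=\ker S\oplus W$ for a complement $W$ on which $S$ restricts to an isomorphism onto $V$, observing $\wT(W)\subseteq W$ is \emph{not} automatic but $\wT$ is block-triangular with respect to this decomposition with diagonal blocks $0$ on $\ker S$ and a conjugate of $T$ on $W$, whence $\spectrum(\wT)=\{0\}\cup\spectrum(T)$ directly; verifying the block-triangular structure uses $\ker\wT=\ker S$ again.)
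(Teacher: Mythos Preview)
Your second inclusion ($\spectrum(\wT)\setminus\{0\}\subseteq\spectrum(T)$) is correct and coincides with the paper's argument. In the first inclusion your strategy is right and the key computation $\wT^2\wv=\lambda\wT\wv$ is correct, but the case split that follows is misstated: from $\wT(\wT\wv-\lambda\wv)=0$ with $\wT\wv-\lambda\wv\neq 0$ you have only produced an eigenvector for the eigenvalue $0$, not for $\lambda$. The fix is immediate and already contained in what you wrote: read $\wT^2\wv=\lambda\wT\wv$ as $\wT(\wT\wv)=\lambda(\wT\wv)$; since $S\wv=v\neq 0$ gives $\wv\notin\ker S=\ker\wT$, you have $\wT\wv\neq 0$, and \emph{this} is the eigenvector for $\lambda$. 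No case distinction is needed.

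For comparison, the paper handles this inclusion by writing the lifted eigenvector down explicitly: given $Tw_\lambda=\lambda w_\lambda$ it sets $\ww_\lambda:=\lambda^{-1}\wB S^*A\,w_\lambda$ and verifies directly that $S\ww_\lambda=w_\lambda$ and $\wT\ww_\lambda=\lambda\ww_\lambda$. Your vector $\wT\wv$ is in fact a scalar multiple of this same $\ww_\lambda$ (since $\wT\wv=\wB S^*AS\wv=\wB S^*A v$), so the two routes land on the same object; the paper's version simply bypasses the arbitrary lift through $S$ and the appeal to $\ker\wT=\ker S$, at the cost of having to guess the right formula.
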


\begin{proof}
For \(\lambda\in\spectrum(\wT)\setminus\{0\}\) there exists
\(\wv_\lambda\not\in \ker S\) such that  \(\wT\wv_\lambda=\lambda\wv_\lambda\). 
	From \eqref{eq:TS=SwT} \[T S\wv_\lambda= S\wT\wv_\lambda =\lambda S \wv_\lambda\]
	and we see that \(S\wv_\lambda\) is an eigenvector for \(T\) and \(\lambda\in\spectrum(T)\). Thus \(\spectrum(\wT)\setminus\{0\} \subseteq \spectrum(T)\).

        Similarly if \(\lambda\in\spectrum(T)\) there exists
        \(w_\lambda\ne 0\) such that  \(\lambda w_\lambda= T
        w_\lambda\). Let
          \begin{equation}
            \label{eq:wwlambda1}
            \ww_\lambda = \lambda^{-1} \wB S^* A
          w_\lambda,
          \end{equation}
          then we have
          \begin{equation}
              \label{eq:wwlambda2}
            S \ww_\lambda = \lambda^{-1} S \wB S^* A w_\lambda =
            \lambda^{-1} T  w_\lambda =  w_\lambda
          \end{equation}
          and, using \eqref{eq:wwlambda2} and then
          \eqref{eq:wwlambda1},  $  \wT \ww_\lambda =   \wB S^* A S \ww_\lambda = \wB S^* A
            w_\lambda= \lambda \ww_\lambda  $, and 
therefore  \(\lambda \in\spectrum(\wT)\).
\end{proof}

\section{Subspace correction methods}

We briefly review, in this section, the so-called \emph{successive subspace correction} (SSC) method, see e.g., \cite{ssc-apm},  and frame them into
our setting. We assume 
local preconditioners \(B_i\), for the restriction of \(A\)
to the subspaces \(V_i\),  are given,  denote their extensions to
the whole $\wV$ as $\wB_i$, that is
\begin{equation}[wBi]
\wB_i:=\begin{pmatrix}
0 & \dots & 0\\ 
 \vdots &B_i & \vdots\\
  0 & \dots &0 
  \end{pmatrix}
\end{equation}
and define the local preconditioned operators $  T_i:=  S \wB_i
S^*A,\text{ and } \wT_i:=\wB_i\wA.$  Consider then the inexact block-Gauss-Seidel
preconditioner $\wBs$, whose application \(\ww=\wBs \wv\) is given by
the following algorithm:
\begin{center}
  \begin{algorithmic}[c]
	\State $\ww\gets 0$
	\For {$j=1,\dots,r$}
	\State $\wt\gets \wB_{i_j} \wv$
 	\State $\ww\gets \ww + \wt$
	\State $\wv\gets \wv- \wA \wt$
\EndFor
\end{algorithmic}
\end{center}
where \((i_1,\dots,i_r)\) defines the sequence of subspace
corrections.  In closed form they can be written as
\begin{equation}[wB-SSC]
\wBs:=\sum_{j=1}^r \wB_{i_j} \prod_{\ell=1}^{j-1}(\wId-\wA \wB_{i_\ell}).
\end{equation}
The preconditoned operator now reads then
\begin{displaymath}
  \wTs: = \wBs  \wA  = \sum_{j=1}^r \wT_{i_j}
  \prod_{\ell=1}^{j-1}(\wId-\wT_{i_\ell}) .
\end{displaymath}  
In this case $\Bs= S \wBs S^*$ is a multiplicative Schwarz
preconditioner (or inexact solver) for $A$.

Examples of $\wBs$ are the   \emph{forward} SSC $\wBs^{\text{forw}}$,
for  \(r=n\) and \(i_j=j\), and  the \emph{backward} SSC $\wBs^{\text{back}}$
to \(r=n\) and \(i_j=n+1-j\).

A symmetric  preconditioner can also be constructed by  a suitable
composition of  $\wBs^{\text{forw}}$, $\wBs^{\text{back}}$ and the diagonal blocks of $\wA$.

\section{Numerical examples}

We consider  the solution of the (variational) Poisson problem:
\begin{equation*}
 \text{find} \; u \in V \subset  H^1_0(\Omega) \quad \text{s.t.} \int_{\Omega} \nabla u \cdot \nabla v \, d\Omega = \int_{\Omega} f v  \, d\Omega \quad \forall v \in V \, ,
\end{equation*}
where  $\Omega$ is the image of the parametric domain $$\widehat\Omega
:= [1,2] \times [\frac{\pi}{4},\frac{3 \pi}{4}] $$ under the polar
mapping $$F(\rho,\theta) = \bigl( \rho \cos(\theta), \rho \sin(\theta)
\bigr)^T,$$ and $f$ is a piecewise-constant function (with random
constant coefficients over a uniform $4^2$ tessellation of the
parametric domain).  
The test and trial space $V$ is defined as $V = V_0 + \sum_{k=1}^n \bigl( V_k^{x} + V_k^{y} \bigr)$, where:
\begin{itemize}
  \item $V_0$ is the push forward through $F$ of the spline space of degree $p$ and continuity $p-1$, built over $\widehat\Omega$ with uniform knots vectors of size $\frac{1}{2^{L+2}} (1, \frac{\pi}{2})$ (with $L \ge0$) and satisfying homogeneous Dirichlet boundary conditions. The support of this space is depicted in Fig.\ref{fig:2d_V0};
  \item $V_k^{x}$ is the push forward through $F$ of the spline space of degree $p$ and continuity $p-1$, built over $\widehat{\Omega}_k^x = [ 1,\frac{3}{2} ] \times [ \frac{\pi}{4},\frac{\pi}{4} \bigl( 1 + \frac{1}{2^{k-1}} \bigr) ]$, with uniform knots vectors of size $\frac{1}{2^{L+2}} (1, \frac{\pi}{2^{k+1}})$ (with $L \ge0$) and satisfying homogeneous Dirichlet boundary conditions.The supports of  the spaces $V_1^{x}$ and $V_2^{x}$  (with $L=0$) are depicted in Fig.\ref{fig:2d_V1x} and Fig.\ref{fig:2d_V2x}, respectively;
  \item $V_k^{y}$ is the push forward through $F$ of the spline space of degree $p$ and continuity $p-1$, built over $\widehat{\Omega}_k^y = [ 1,1 + \frac{1}{2^k} ] \times [ \frac{\pi}{4},\frac{5 \pi}{8} ] $, with uniform knots vectors of size $\frac{1}{2^{L+2}} (\frac{1}{2^k}, \frac{\pi}{2})$ (with $L \ge0$) and satisfying homogeneous Dirichlet boundary conditions. The support of the spaces $V_1^{y}$ and $V_2^{y}$ (with $L=0$) are depicted in Fig.\ref{fig:2d_V1y} and Fig.\ref{fig:2d_V2y}, respectively.
\end{itemize}

\begin{figure}
  \centering
  \includegraphics[width=.4\linewidth]{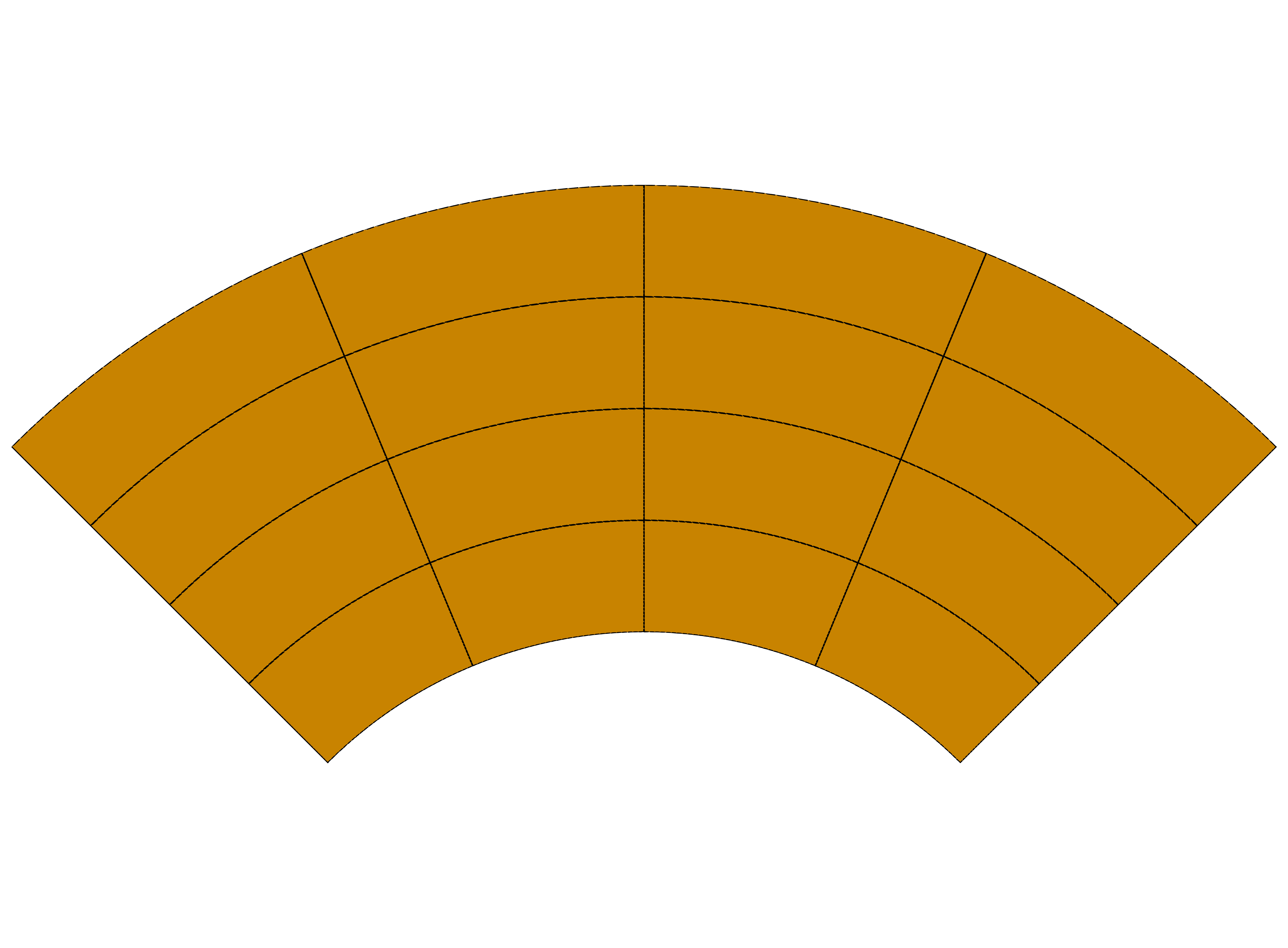}
  \caption{2D case: the physical domain $F(\widehat{\Omega}_0)$ associated to the space $V_0$ for $L=0$.}
  \label{fig:2d_V0}
\end{figure}

The iterative linear solver is a symmetric Gauss-Seidel preconditioned
Conjugate Gradient, stopped when the discrete residual is $< 10^{-11}$
or the number of iterations reached the upper limit of $1000$.
We tested the proposed method with $n = 2$ (corresponding to 5
overlapping domains) with different values of the mesh parameter $L$
for degrees $p=2$ (Table~\ref{table:aniso_2D_deg2_n2}) and $p=3$
(Table~\ref{table:aniso_2D_deg3_n2}), where we observe the
effectiveness of the preconditioned solver: in both cases $p=2$ and
$p=3$ the number of iterations needed for the convergence of the
preconditioned approach increases  for the coarser meshes (i.e. low values of $L$) and then remain almost constant for the finer meshes, as expected. 

\begin{figure}
\begin{subfigure}{.5\textwidth}
  \centering
  \includegraphics[width=.8\linewidth]{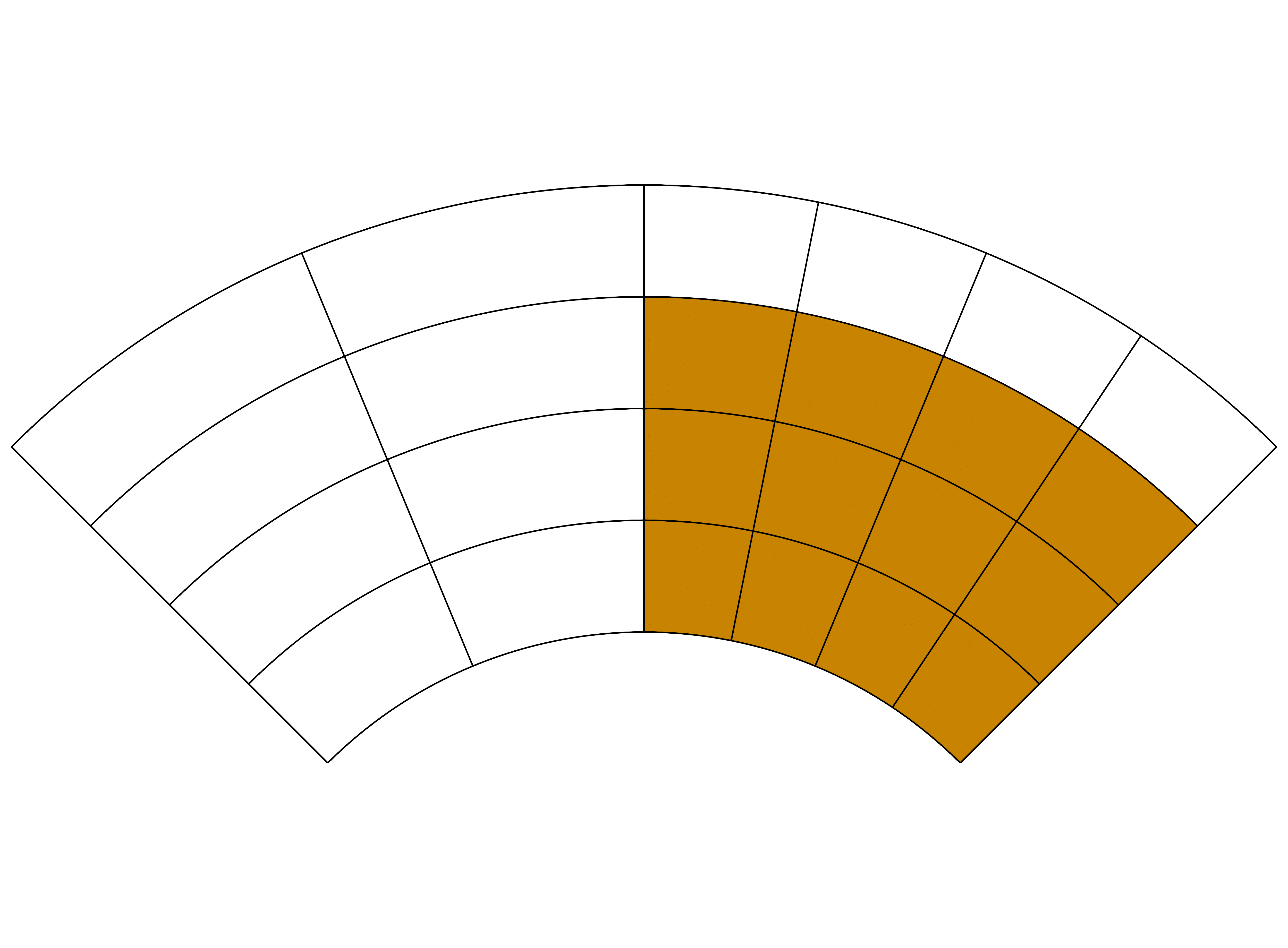}
  \caption{$F(\widehat{\Omega}_{1,x})$}
  \label{fig:2d_V1x}
\end{subfigure}
\begin{subfigure}{.5\textwidth}
  \centering
  \includegraphics[width=.8\linewidth]{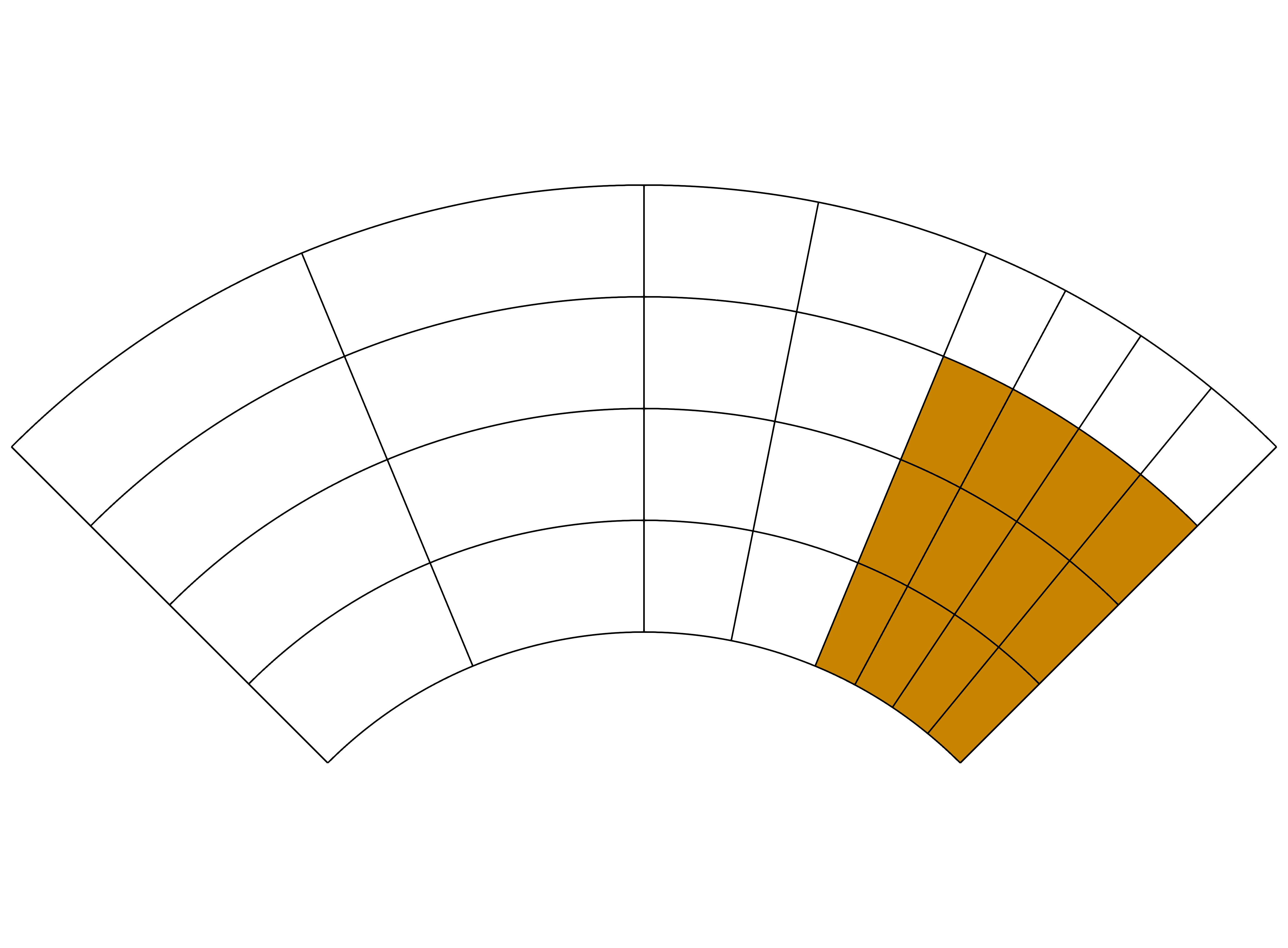}
  \caption{$F(\widehat{\Omega}_{2,x})$}
  \label{fig:2d_V2x}
\end{subfigure}
\vfill
\begin{subfigure}{.5\textwidth}
  \centering
  \includegraphics[width=.8\linewidth]{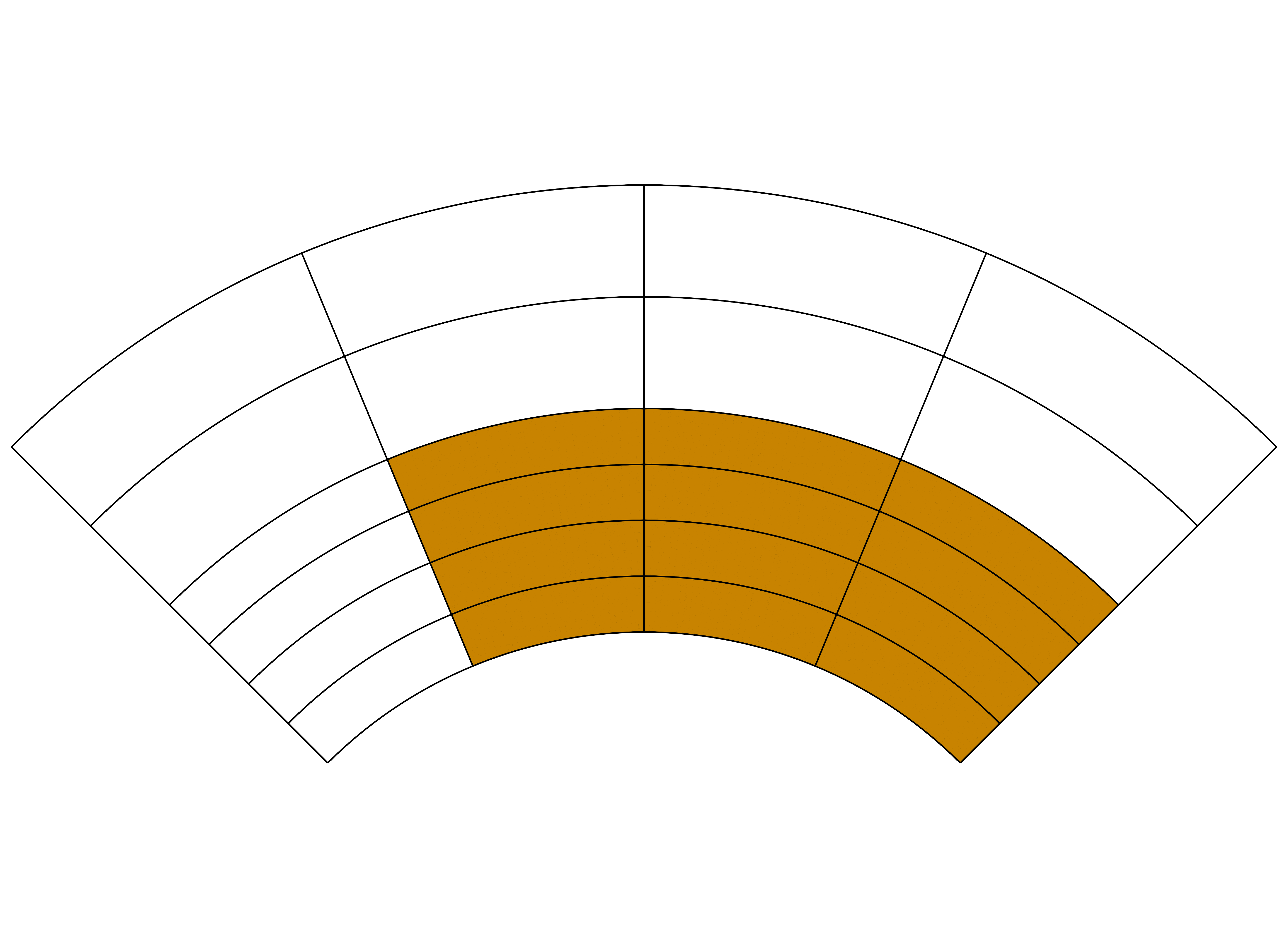}
  \caption{$F(\widehat{\Omega}_{1,y})$}
  \label{fig:2d_V1y}
\end{subfigure}%
\begin{subfigure}{.5\textwidth}
  \centering
  \includegraphics[width=.8\linewidth]{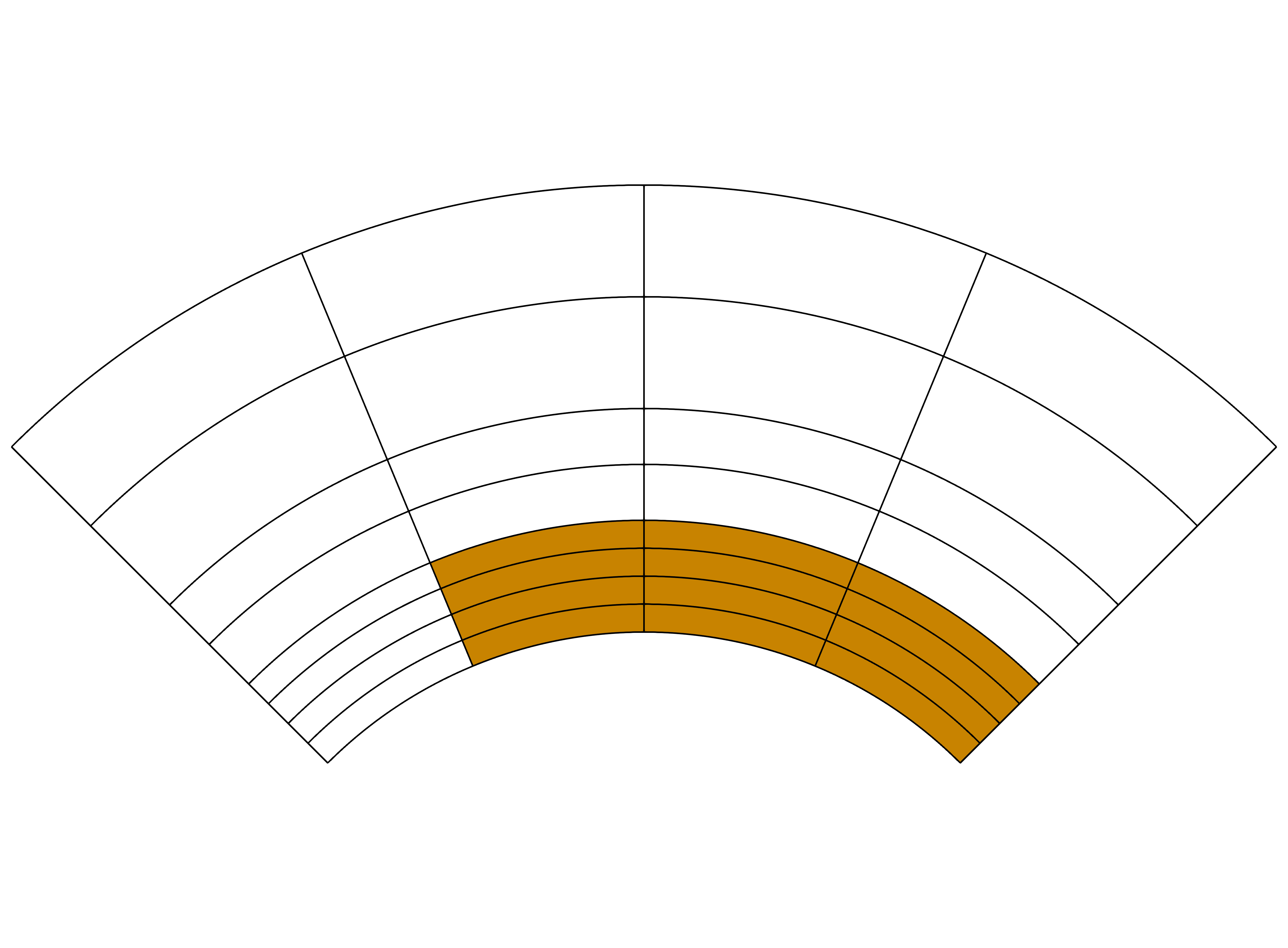}
  \caption{$F(\widehat{\Omega}_{2,y})$}
  \label{fig:2d_V2y}
\end{subfigure}%
\caption{2D case: the different physical domains associated to the spaces $V_{1}^x$ (\ref{fig:2d_V1x}), $V_{2}^x$ (\ref{fig:2d_V2x}), $V_{1}^y$ (\ref{fig:2d_V1y}) and $V_{2}^y$ (\ref{fig:2d_V2y}) for $L=0$.}
\label{fig:2d_phys_domains}
\end{figure}

\pgfplotstableset{create on use/NewColL2D/.style={create col/set list={0,1,2,3,4,5,6,7},},columns/NewColL2D/.style={string type},}

\pgfplotstableread[skip first n=1]{./numerical_tests/convergence_results_2D_anisotropic_2_deg_min=2_deg_max=2_nlevels_min=2_nlevels_max=2_polar_map_gauss_seidel.dat}{\dataAnisoTwoDdegTwo}

\begin{table}
  \centering
  \pgfplotstabletypeset[
    skip first n=1,
    columns={NewColL2D,7,9,10,11,12},
    every head row/.style={
      before row={
      \toprule
      & & \multicolumn{2}{c}{CG} & \multicolumn{2}{c}{PCG-GS} \\
      },
      after row=\midrule,
    },
    every last row/.style={
      after row=\bottomrule},
    sci zerofill,
    precision=2,
    columns/NewColL2D/.style={
      column name=$L$,
      int detect},
    columns/7/.style={
      column name=DOFs,
      int detect},
    columns/9/.style={
      column name=N. iters,
      int detect
      }, 
    columns/10/.style={
      column name=Residual}, 
    columns/11/.style={
      column name=N. iters,
      int detect}, 
    columns/12/.style={
      column name=Residual}, 
  ]
  {\dataAnisoTwoDdegTwo}
  \caption{Number of iterations
  w.r.t. the mesh parameter $L$, for the case with degree $p = 2$ and $n=2$ (i.e. $5$ overlapping domains).} \label{table:aniso_2D_deg2_n2}
\end{table}

\pgfplotstableread[skip first n=1]{./numerical_tests/convergence_results_2D_anisotropic_2_deg_min=3_deg_max=3_nlevels_min=2_nlevels_max=2_polar_map_gauss_seidel.dat}{\dataAnisoTwoDdegThree}

\begin{table}
  \centering
  \pgfplotstabletypeset[
    skip first n=1,
    columns={NewColL2D,7,9,10,11,12},
    every head row/.style={
      before row={
      \toprule
      & & \multicolumn{2}{c}{CG} & \multicolumn{2}{c}{PCG-GS} \\
      },
      after row=\midrule,
    },
    every last row/.style={
      after row=\bottomrule},
    sci zerofill,
    precision=2,
    columns/NewColL2D/.style={
      column name=$L$,
      int detect},
    columns/7/.style={
      column name=DOFs,
      int detect},
    columns/9/.style={
      column name=N. iters,
      int detect
      }, 
    columns/10/.style={
      column name=Residual}, 
    columns/11/.style={
      column name=N. iters,
      int detect}, 
    columns/12/.style={
      column name=Residual}, 
  ]
  {\dataAnisoTwoDdegThree}
  \caption{Number of iterations
  w.r.t. the mesh parameter $L$, for the case with degree $p = 3$ and $n=2$ (i.e. $5$ overlapping domains).} \label{table:aniso_2D_deg3_n2}
\end{table}

\section{Conclusions}
In this work, we proposed a framework based on well-known tools such
as Krylov solvers and preconditioners based on subspace
corrections. The novelty we introduce is working with a linear system
obtained from the disjoint union of the degrees of freedom of the
subspaces. This approach has the advantage of preserving the original
structure of the subspaces, for example, the tensor structure in the
case of isogeometric discretizations, possibly allowing efficient
(exact or inexact) solvers on the subspaces. We believe this is a promising direction, although in this work it is only sketched out and deserves further investigation and development.

\section*{Acknowledgments}

The   authors are members of GNCS-INDAM.
MM and GS acknowledge support from
PNRR-M4C2-I1.4-NC-HPC-Spoke6.  AB   acknowledges the contribution of
the Italian  MUR through the PRIN project COSMIC
(No. 2022A79M75). GS acknowledges support from the PRIN 2022 PNRR project NOTES
(No. P2022NC97R).

\bibliographystyle{plain}
\bibliography{refs}

\end{document}